\newtheorem{theorem}{Theorem}
\newtheorem{prop}[theorem]{Proposition}
\newtheorem{corollary}[theorem]{Corollary}
\theoremstyle{definition}
\newtheorem{example}[theorem]{Example}
\theoremstyle{remark}
\newcommand{\DD}{{\mathbb D}}
\newcommand{\OO}{{\mathcal O}}
\newcommand{\LL}{{\mathcal L}}
\newcommand{\MM}{{\mathcal M}}
\newcommand{\NN}{{\mathbb N}}
\newcommand{\RR}{{\mathbb R}}
\newcommand{\CC}{{\mathbb C}}
\renewcommand{\phi}{\varphi}
\begin{document}

%%%%%%%%%%%%%%%%%%%%%%%%%%%%%%%%%%%%%%%%%%%%%%%%%%%%%%%%%%%%%%%%%%%%%%
%%%%%%%%%%%%%%%%%%%%%%%%%%%%%%%%%%%%%%%%%%%%%%%%%%%%%%%%%%%%%%%%%%%%%%
\title{Carath\'eodory completeness on the complex plane}

\author{Armen Edigarian}

\address{Jagiellonian University, Faculty of Mathematics and Computer Science,
\L ojasiewicza~6, 30-348 Krak\'ow, Poland}
\email{armen.edigarian@uj.edu.pl}
\thanks{The author was supported in part by the Polish National Science Centre (NCN) grant
no.~2015/17/B/ST1/00996}

%    General info
%%%\subjclass{Primary 54C40, 14E20; Secondary 46E25, 20C20}

\keywords{Carath\'eodory distance, completeness, Melnikov's theorem, peak function}

\maketitle

\begin{center}
{\it Dedicated to the memory of Professor J\'ozef Siciak}
\end{center}

\begin{abstract} In 1975 N. Sibony \cite{Sib1} and, independently, M. A. Selby \cite{Sel1} proved that on the complex plane $c$-completeness is equivalent to $c$-finitely compactness. 
In the paper we give a local version of their results. We also simplify the proofs.
\end{abstract}

\maketitle

\section{Introduction}
Let $\DD=\{\lambda\in\CC: |\lambda|<1\}$ denote the unit disk. For $\lambda',\lambda''\in\DD$ we define the M\"obius function $m$ on $\DD$ as 	
$$
m(\lambda', \lambda'')=\left|\frac{\lambda'-\lambda''}{1-\bar \lambda'\lambda''}\right|,
$$
and the Poincar\'e function $p$ as $p(\lambda',\lambda'')=\frac12\log\frac{1+m(\lambda',\lambda'')}{1-m(\lambda',\lambda'')}$.

Let $X$ be a complex manifold. For $x,y\in X$ put
\begin{align*}
c_X(x,y)=&\sup\{p(f(x),f(y)): f\in\OO(X;\DD)\},\\
c_X^\ast(x,y)=&\sup\{m(f(x),f(y)): f\in\OO(X;\DD)\},
\end{align*}
where $\OO(X;\DD)$ denotes the set of all holomorphic mappings $X\to\DD$. The function $c_X$ is called the Carath\'eodory pseudodistance for $X$ (see e.g. \cite{JP1}). In case, when $c_X$ is indeed distance we say that $X$ is $c$-hyperbolic. A $c$-hyperbolic manifold $X$ is called $c$-complete if any $c_X$-Cauchy sequence $\{x_\nu\}_{\nu\ge1}\subset X$ converges to a point $x_0\in X$ (w.r.t. "usual" topology). We say that a $c$-hyperbolic manifold $X$ is $c$-finitely compact if any ball
$B_c(x_0,R)=\{x\in X: c_X(x,x_0)<R\}$ is relatively compact in $X$ (w.r.t. "usual" topology). It is known (see \cite{Sib1} and \cite{Sel1}) that on a domain in the complex plane these two notions are equivalent.
The aim of this paper is to give a local version of the results of N. Sibony \cite{Sib1} and M. A. Selby \cite{Sel1}, we also simplify the proofs.
\begin{theorem}\label{thm:1}
Let $\Omega\subset\CC$ be a domain and let $\zeta\in\partial\Omega$ be a boundary point. Then the following conditions are equivalent:
\begin{enumerate}
\item\label{thm:1:1} the point $\zeta$ is a weak peak function for $\Omega$, i.e., there exists an $f\in \OO(\Omega)\cap C(\Omega\cup\{\zeta\})$ such that $|f|<1$ on $\Omega$ and $f(\zeta)=1$;
\item\label{thm:1:4} there exist no Borel finite measure $\mu$ on $\Omega$ such that
$$
|f(\zeta)|\le\int|f|d\mu\quad\text{ for any } f\in \OO(\Omega)\cap C(\Omega\cup\{\zeta\}).
$$
\item\label{thm:1:5} there exist no Borel probability measure $\mu$ on $\Omega$ such that
$$
f(\zeta)=\int f d\mu\quad\text{ for any } f\in \OO(\Omega)\cap C(\Omega\cup\{\zeta\}).
$$
\item\label{thm:1:3} 
fix any $a\in(0,1)$. Then 
$$
\sum_{n=1}^\infty \frac{\gamma(A_n(\zeta,a)\setminus\Omega)}{a^n}=+\infty,
$$
where $A_n(\zeta,a)=\{z\in\CC: a^{n+1}\le |z-\zeta|\le a^n\}$ and $\gamma$ is the analytic capacity (see \cite{Gam} and definition below);
\item\label{cond:1} for any sequence $\{z_\nu\}_{\nu\ge1}\subset\Omega$ such that $z_\nu\to\zeta$ we have $c_{\Omega}(z_0,z_\nu)\to\infty$;
%\item\label{cond:1:2} there exists a set $A\subset\Omega$ of positive density at $\zeta$ such %that for any sequence $\{z_\nu\}_{n\ge1}\subset A$ with $z_\nu\to\zeta$ we have $c_{\Omega}%(z_0,z_\nu)\to\infty$;
\item\label{cond:3} for any sequence $\{z_\nu\}_{\nu\ge1}\subset\Omega$ such that $z_\nu\to\zeta$ we have $\{z_\nu\}$ is not a $c_{\Omega}$-Cauchy sequence;
%\item\label{cond:1:4} there exists a set $A\subset\Omega$ of positive density at $\zeta$ such %that for any sequence $\{z_\nu\}_{\nu\ge1}\subset A$ such that $z_\nu\to\zeta$ we have $\{z_%\nu\}$ is not a $c_{\Omega}$-Cauchy sequence;

\item\label{cond:4} for any sequence $\{z_\nu\}_{\nu\ge1}\subset\Omega$ with $z_\nu\to\zeta$ there exists an $f\in\OO(\Omega)$ such that
$|f|<1$ on $\Omega$ and $f(z_\nu)\to1$ when $\nu\to\infty$.
\end{enumerate}
\end{theorem}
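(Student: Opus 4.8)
The plan is to prove the cycle of equivalences by establishing implications that close a loop through all seven conditions, leveraging the classical theory of analytic capacity and peak functions. The natural route is $\eqref{thm:1:1}\Rightarrow\eqref{cond:4}\Rightarrow\eqref{cond:3}\Rightarrow\eqref{cond:1}\Rightarrow\eqref{thm:1:5}\Rightarrow\eqref{thm:1:4}\Rightarrow\eqref{thm:1:3}\Rightarrow\eqref{thm:1:1}$, with the analytic-capacity condition \eqref{thm:1:3} and Melnikov's theorem serving as the bridge back to the peak function.

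\textbf{Easy implications.} First I would dispatch the soft parts. For $\eqref{thm:1:1}\Rightarrow\eqref{cond:4}$, if $f$ is a weak peak function at $\zeta$, then for any sequence $z_\nu\to\zeta$ continuity gives $f(z_\nu)\to f(\zeta)=1$, so $f$ itself works. For $\eqref{cond:4}\Rightarrow\eqref{cond:3}$: if $f\in\OO(\Omega)$, $|f|<1$, and $f(z_\nu)\to1$, then $p(f(z_\mu),f(z_\nu))$ does not stay bounded (points approaching the boundary of $\DD$ have unbounded Poincar\'e distance to a fixed interior point, and by passing to a subsequence one sees the sequence is not Cauchy), hence $c_\Omega(z_\mu,z_\nu)\ge p(f(z_\mu),f(z_\nu))$ shows $\{z_\nu\}$ is not $c_\Omega$-Cauchy. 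The implication $\eqref{cond:3}\Rightarrow\eqref{cond:1}$ needs a small argument: if some sequence $z_\nu\to\zeta$ had $c_\Omega(z_0,z_\nu)\not\to\infty$, then along a subsequence $c_\Omega(z_0,z_{\nu_k})$ is bounded, and I would want to conclude that this subsequence is $c_\Omega$-Cauchy, contradicting \eqref{cond:3}; the cleanest way is contrapositive — a non-Cauchy-for-all-sequences statement is equivalent to the blow-up of distance, using that any bounded subsequence of Cauchy-type behaviour can be extracted. The implication $\eqref{cond:1}\Rightarrow\eqref{thm:1:5}$: if there were a probability measure $\mu$ with $f(\zeta)=\int f\,d\mu$ for all $f\in\OO(\Omega)\cap C(\Omega\cup\{\zeta\})$, then for a fixed $z_0$ and any competitor $f\in\OO(\Omega;\DD)$ (which we may assume extends continuously by an exhaustion/normal-families argument, or restrict to such $f$) we get $|f(\zeta)|\le\int|f|\,d\mu\le 1$ with equality forcing rigidity — more to the point, $c_\Omega(z_0,z_\nu)$ can be bounded using the reproducing measure, contradicting \eqref{cond:1}. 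Then $\eqref{thm:1:5}\Rightarrow\eqref{thm:1:4}$ is immediate since a probability measure satisfying the reproducing identity gives $|f(\zeta)|=|\int f\,d\mu|\le\int|f|\,d\mu$.

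\textbf{The hard part.} The real content is the equivalence of the measure-theoretic/potential-theoretic conditions with the capacity series, i.e. $\eqref{thm:1:4}\Leftrightarrow\eqref{thm:1:3}$ and $\eqref{thm:1:3}\Rightarrow\eqref{thm:1:1}$. Here I expect to invoke Melnikov's theorem: the Wiener-type series $\sum \gamma(A_n(\zeta,a)\setminus\Omega)/a^n$ diverges if and only if $\zeta$ is a \emph{peak point} for the algebra $R(\overline\Omega)$ (rational functions with poles off $\overline\Omega$), equivalently there is a function in $\OO(\Omega)\cap C(\Omega\cup\{\zeta\})$ peaking at $\zeta$; and the failure of the peak property is detected precisely by the existence of a representing (Jensen or Arens–Royden-type) measure, which is the content of \eqref{thm:1:4} and \eqref{thm:1:5}. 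The main obstacle is wiring together the classical rational-approximation statements (Melnikov's criterion, the Bishop peak-point characterization via representing measures) with the holomorphic category $\OO(\Omega)\cap C(\Omega\cup\{\zeta\})$ used here, taking care that the measure in \eqref{thm:1:4}–\eqref{thm:1:5} lives on $\Omega$ and not on $\overline\Omega$, and that the relevant function algebra is the right one. I would handle this by a localization: the peak property at $\zeta$ depends only on the geometry of $\CC\setminus\Omega$ near $\zeta$ (hence the dyadic annuli $A_n$), so one can reduce to a model where $\overline\Omega$ is compact and Melnikov applies verbatim, then transfer the peak function back. Once the peak function is produced, $\eqref{thm:1:3}\Rightarrow\eqref{thm:1:1}$ is complete, closing the cycle; and the converse direction of Melnikov gives, from the \emph{absence} of a peak function, a representing measure, feeding back into \eqref{thm:1:4}. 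I would present the Melnikov input as a cited black box and spend the effort on the reductions and on the $c$-distance estimates linking \eqref{cond:1}, \eqref{cond:3}, \eqref{cond:4} to the peak/measure dichotomy.
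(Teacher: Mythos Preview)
Your overall architecture is reasonable and the Melnikov/Bishop input you identify as the ``hard part'' is indeed what closes the loop between \eqref{thm:1:3}, \eqref{thm:1:5}, and \eqref{thm:1:1}; the paper treats that as a citation from Gamelin, just as you propose. However, you have mislocated the actual work and there is a genuine gap.

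The step you treat as soft, namely $\eqref{cond:1}\Rightarrow\eqref{thm:1:5}$ (equivalently: existence of a representing measure $\mu$ on $\Omega$ produces a sequence $z_\nu\to\zeta$ with bounded, indeed Cauchy, $c_\Omega$-behavior), is where the paper spends its effort, and your sketch does not supply the mechanism. The measure $\mu$ controls $f(\zeta)$, not $f(z_\nu)$; saying ``$c_\Omega(z_0,z_\nu)$ can be bounded using the reproducing measure'' is the whole problem, not its solution. The paper's idea is to test the inequality $|g(\zeta)|\le\int|g|\,d\mu$ on the difference quotient $g(z)=\dfrac{f(z)-f(\eta)}{z-\eta}$, which yields
\[
|f(\eta)-f(\zeta)|\le 2\|f\|_\infty\,|\eta-\zeta|\,M(\eta),\qquad M(\eta)=\int\frac{d\mu(w)}{|w-\eta|},
\]
and hence $c_\Omega^\ast(\eta_1,\eta_2)\lesssim |\zeta-\eta_1|M(\eta_1)+|\zeta-\eta_2|M(\eta_2)$. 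One then needs a density argument (the Newton potential $M$ has $\lim_{r\to0}\frac{1}{\pi r^2}\int_{\DD(\zeta,r)}|w-\zeta|M(w)\,d\LL(w)=\mu(\{\zeta\})=0$, together with the fact that $\Omega$ has full density at $\zeta$, itself a consequence of the failure of the Curtis-type peak criterion) to extract $\eta_n\in\Omega$ with $|\zeta-\eta_n|M(\eta_n)\to 0$. This is the substantive analytic content and is absent from your plan.

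Two smaller points. First, your ordering $\eqref{cond:4}\Rightarrow\eqref{cond:3}\Rightarrow\eqref{cond:1}$ makes the middle implication awkward (bounded $c_\Omega(z_0,z_{\nu_k})$ does not by itself give a Cauchy subsequence); the paper instead runs $\eqref{cond:4}\Rightarrow\eqref{cond:1}\Rightarrow\eqref{cond:3}$, where both steps are one-liners. Second, your justification of $\eqref{thm:1:5}\Rightarrow\eqref{thm:1:4}$ actually proves the opposite direction: ``a reproducing probability gives $|f(\zeta)|\le\int|f|\,d\mu$'' shows that a reproducing measure is dominating, i.e.\ $\neg\eqref{thm:1:5}\Rightarrow\neg\eqref{thm:1:4}$, which is $\eqref{thm:1:4}\Rightarrow\eqref{thm:1:5}$. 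Going from a finite dominating measure to a probability representing measure is not free on a noncompact $\Omega$; the paper sidesteps this by running the cycle in the other order, using $\eqref{thm:1:4}\Rightarrow\eqref{thm:1:5}$ as the trivial implication and letting the Melnikov/Bishop theorem supply $\neg\eqref{thm:1:3}\Rightarrow\neg\eqref{thm:1:5}$ directly.
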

Part of Theorem~\ref{thm:1} for a domain $\Omega$ in $\CC^N$ is claimed in \cite{Edi}. However, the presented proof was based on a version of Hahn-Banach theorem given in \cite{Pol-Gog}. At the end of the paper we give a simple example, which shows that this version of Hahn-Banach does not hold. So, at the moment we do not know whether Theorem~\ref{thm:1} holds true in higher dimension.

\section{Conditions for existence of a peak function}
Let $F$ be any subset of $\CC$. 
We denote by $R(F)$ the set of all continuous functions on $F$, which can be approximated by holomorphic functions on a neighborhood of $F$. 
We say that $x_0\in F$ is a peak point for $R(F)$ if there exists a function $f\in R(F)$ such that $|f|<1$ on $F\setminus\{x_0\}$ and $f(x_0)=1$.
The next result  (see e.g., Theorem 10.8 in Chapter VIII \cite{Gam}) gives a description of $R(\Omega\cup\{\zeta\})$ for a domain $\Omega\subset\CC$ and a boundary point $\zeta\in\partial\Omega$. 
\begin{theorem} Let $\Omega\subset\CC$ be a domain and let $\zeta\in\partial\Omega$. 
For any $f\in H^\infty(\Omega)$
there exists a sequence $\{f_n\}_{n\ge1}\subset H^\infty(\Omega)$ with $\|f_n\|_{\Omega}\le 17\|f\|_{\Omega}$ such that $f_n\to f$ locally uniformly on $\Omega$ and each $f_n$ extends holomorphically to a neighborhood of $\zeta$. Moreover, if $f$ extends continuously to $\zeta$ then $f_n$ tends to $f$ uniformly on $\Omega$.

In particular,  $R(\Omega\cup\{\zeta\})=\OO(\Omega)\cap C(\Omega\cup\{\zeta\})$.
\end{theorem}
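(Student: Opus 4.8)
My plan is to use Vitushkin's localization near $\zeta$; the one adjustment needed is that $f$ is in general defined only on $\Omega$, so the correcting Cauchy transform has to be taken over $\Omega$ alone. First, normalize $\zeta=0$ and write $D_r=\{z\in\CC:|z|<r\}$, $S_r=\{z:r/2\le|z|\le r\}$; let $dA$ denote planar Lebesgue measure. For small $r>0$ I fix a radial cut-off $\phi_r\in C^\infty_c(D_r)$ with $\phi_r\equiv 1$ on $D_{r/2}$, $0\le\phi_r\le 1$ and $\|\bar\partial\phi_r\|_\infty\le 8/r$ (elementary: take $\phi_r(z)=\chi(|z|/r)$ for a fixed smooth $\chi$ with $\chi\equiv1$ on $[0,1/2]$, $\chi\equiv 0$ on $[1,\infty)$); note $\supp\bar\partial\phi_r\subset S_r$. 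Then I set
$$
g_r(z)=\frac1\pi\int_{\Omega\cap S_r}\frac{f(w)\,\bar\partial\phi_r(w)}{z-w}\,dA(w),\qquad z\in\CC.
$$
The integrand is bounded, measurable and supported in $\overline{D_r}$, so $g_r$ is continuous on $\CC$, holomorphic on $\CC\setminus\overline{\Omega\cap S_r}$ (hence on $D_{r/2}$), and satisfies $\bar\partial g_r=f\,\bar\partial\phi_r$ on $\Omega$ in the sense of distributions; moreover, using the elementary bound $\int_{D_r}|z-w|^{-1}\,dA(w)\le 2\pi r$ (valid for all $z$, with equality at $z=0$), one gets $|g_r(z)|\le\frac1\pi\|f\|_\Omega\cdot\frac8r\cdot 2\pi r=16\|f\|_\Omega$ for every $z$.

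Next I put $f_r:=(1-\phi_r)f+g_r$ on $\Omega$. Since $\bar\partial f_r=-f\,\bar\partial\phi_r+f\,\bar\partial\phi_r=0$ on $\Omega$, $f_r$ is holomorphic on $\Omega$, and $\|f_r\|_\Omega\le\|f\|_\Omega+16\|f\|_\Omega=17\|f\|_\Omega$, so $f_r\in H^\infty(\Omega)$ with the stated bound. On $\Omega\cap D_{r/2}$ we have $\phi_r\equiv 1$, hence $f_r=g_r$ there; gluing $f_r$ on $\Omega$ to $g_r$ on $D_{r/2}$ (where it is holomorphic) exhibits $f_r$ as a function holomorphic on the open neighborhood $\Omega\cup D_{r/2}$ of $\zeta$. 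Finally, if $K\Subset\Omega$ and $d:=\dist(K,0)>0$, then for $r<d/2$ one has $\phi_r|_K\equiv0$, so $f_r-f=g_r$ on $K$ and $|g_r(z)|\le\frac1\pi\|f\|_\Omega\cdot\frac8r\cdot\frac{\pi r^2}{d/2}=\frac{16\|f\|_\Omega\,r}{d}$ there. Choosing any $r_n\downarrow 0$, the functions $f_n:=f_{r_n}$ meet every requirement of the first assertion.

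For the ``moreover'' part the key observation is that $f_r-f=g_r-\phi_r f$ and that $\|\phi_r f\|_\Omega$ stays close to $|f(\zeta)|$ rather than tending to $0$: when $\Omega$ omits a set of positive measure from every $S_r$, the extended value $g_r(0)$ need not equal $\phi_r(0)f(\zeta)=f(\zeta)$. I remedy this by subtracting the constant $f(\zeta)$ first. Applying the first part to $h:=f-f(\zeta)\in H^\infty(\Omega)$, which satisfies $h(\zeta)=0$, I obtain functions $h_r=(1-\phi_r)h+g_r^h$; with $\eta(r):=\sup_{\Omega\cap D_r}|h|\to0$ (continuity of $f$ at $\zeta$), the integrand defining $g_r^h$ is bounded by $8\eta(r)/r$ on $\Omega\cap S_r$, so $\|g_r^h\|_\infty\le 16\eta(r)$ and $\|\phi_r h\|_\Omega\le\eta(r)$, whence $\|h_r-h\|_\Omega\le 17\eta(r)\to0$. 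Then $f_n:=h_{r_n}+f(\zeta)$ is holomorphic on $\Omega\cup D_{r_n/2}$, converges to $f$ uniformly on $\Omega$, and --- using $|f(\zeta)|\le\|f\|_\Omega$ and $\|h_{r_n}\|_\Omega\le\|h\|_\Omega+16\eta(r_n)\le 2\|f\|_\Omega+o(1)$ --- satisfies $\|f_n\|_\Omega\le 17\|f\|_\Omega$ for all large $n$; discarding finitely many terms settles this case.

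The final assertion then follows. If $f\in R(\Omega\cup\{\zeta\})$, it is a uniform limit on $\Omega\cup\{\zeta\}$ of functions holomorphic on neighborhoods of $\Omega\cup\{\zeta\}$, so $f\in C(\Omega\cup\{\zeta\})$ and, by Weierstrass' theorem, $f\in\OO(\Omega)$. Conversely, for $f\in\OO(\Omega)\cap C(\Omega\cup\{\zeta\})$, continuity at $\zeta$ makes $f$ bounded on some $D_{r_0}\cap\Omega$, so for $r<r_0$ the construction above (applied to $f-f(\zeta)$) goes through with $\|f\|_\Omega$ replaced by $\sup_{D_{r_0}\cap\Omega}|f|$ throughout the Cauchy-transform estimates; the resulting $f_n$ are holomorphic on $\Omega\cup D_{r_n/2}$, an open neighborhood of $\Omega\cup\{\zeta\}$, and converge to $f$ uniformly on $\Omega\cup\{\zeta\}$, giving $f\in R(\Omega\cup\{\zeta\})$. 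The main obstacle I expect is the ``moreover'' part: noticing that the naive localization $(1-\phi_r)f+g_r$ fails to converge uniformly precisely because of the term $\phi_r f$ near $\zeta$, and that the legitimate cure --- subtracting the boundary value $f(\zeta)$, which is available only once $f$ is continuous at $\zeta$ --- still keeps the constant $17$ for large $n$; a secondary technical point is setting up $g_r$ as a Cauchy transform over $\Omega$ only, so that both $\bar\partial f_r=0$ on $\Omega$ and the holomorphy of $g_r$ near $\zeta$ persist even though $f$ is defined only on $\Omega$.
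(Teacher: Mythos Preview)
The paper does not prove this theorem; it simply cites Gamelin (Theorem~10.8, Chapter~VIII). Your argument is the standard Vitushkin localization that underlies Gamelin's treatment, carried out correctly: the Cauchy transform $g_r$ taken only over $\Omega\cap S_r$ is the right fix so that $\bar\partial g_r=f\,\bar\partial\phi_r$ holds on $\Omega$ while $g_r$ stays holomorphic on $D_{r/2}$, the gluing $f_r=(1-\phi_r)f+g_r$ is then holomorphic on $\Omega\cup D_{r/2}$ with $\|f_r\|_\Omega\le 17\|f\|_\Omega$, and the subtraction of $f(\zeta)$ in the ``moreover'' part is exactly what is needed to upgrade local to uniform convergence. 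Your handling of the non-$H^\infty$ case in the last paragraph (using only local boundedness near $\zeta$) is also correct. In short, your proof is sound and coincides with the approach the paper is invoking by reference.
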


Let us give a variation of 1/4-3/4 Bishop's theorem (cf. Theorem 11.1 in Chapter II \cite{Gam}), which is a standard technique in a construction of peak functions.
\begin{theorem}\label{theorem:10a} Let $X$ be a topological space
and let $x_0\in X$. Assume that there exists a sequence $\{f_\nu\}_{\nu}\subset C(X)$ and numbers $0<r<1\le R$ such that
\begin{itemize}
\item $|f_\nu|\le R$ on $X$;
\item $f_\nu(x_0)=1$;
\item for any $x\in X\setminus\{x_0\}$ there exists a $k=k(x)$ such that $|f_\nu(x)|<1$ for any $\nu\ge k$.
\item for any $k\in\NN$ and any $\epsilon>0$ there exists an $m_0$ such that
for any $m\ge m_0$ we have
$|f_m|\le r$ on the set $\{z\in\Omega:|f_k(z)|\ge 1+\epsilon\}$.
\end{itemize}
Then for any $s\in (\frac{R-1}{R-r},1)$ there exists a subsequence $\{n_k\}_{k=1}^\infty$ such that
for a function $h(x)=(1-s)\sum_{k=1}^\infty s^k f_{n_k}(x)$ we have
\begin{enumerate}
\item $|h|<1$ on $X\setminus\{x_0\}$;
\item $h(x_0)=1$;
\item $F_N$ tends uniformly to $h$ on $X$, where $F_N=(1-s)\sum_{k=1}^N s^k f_{n_k}$.
\end{enumerate}
\end{theorem}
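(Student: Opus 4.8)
The plan is to run a Bishop-style (``$\frac14$--$\frac34$'') averaging argument, in which the subsequence and the thresholds occurring in the fourth hypothesis are chosen in a coordinated way. First I would record the elementary reformulation of the assumption on $s$: for $s\in(\frac{R-1}{R-r},1)$ one has $s(R-r)>R-1$, equivalently $1-rs>R(1-s)$, so that
$\beta:=\frac{1-rs-R(1-s)}{1-s}$
is a strictly positive constant depending only on $r,R,s$; note also that the interval $(\frac{R-1}{R-r},1)$ is non-empty precisely because $r<1\le R$. I then fix once and for all a geometrically decaying threshold sequence $\epsilon_j:=c\,s^{\,j}$ with $0<c<\beta(1-s)s$, so that $\epsilon_j\downarrow 0$ and $\epsilon_j<\beta(1-s)s^{\,j+1}$ for every $j$; this purely numerical choice is calibrated so that the final estimate will close.

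Next I would build the subsequence $n_0<n_1<n_2<\dots$ by induction. Having chosen $n_0,\dots,n_j$, I apply the fourth hypothesis to each pair $(k,\epsilon)=(n_i,\epsilon_j)$ with $i\le j$, obtaining $m_0(i,j)$ with $|f_m|\le r$ on $\{|f_{n_i}|\ge 1+\epsilon_j\}$ for all $m\ge m_0(i,j)$, and set $n_{j+1}:=\max\{n_j+1,m_0(0,j),\dots,m_0(j,j)\}$. The single consequence of this construction that I will use later is: for all $i\ge 0$ and all $l>i$, $|f_{n_l}|\le r$ on $\{|f_{n_i}|\ge 1+\epsilon_{l-1}\}$. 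Now I put $h:=(1-s)\sum_{k\ge 0}s^k f_{n_k}$ and $F_N:=(1-s)\sum_{k=0}^{N}s^k f_{n_k}$, the weights $(1-s)s^k$, $k\ge 0$, summing to $1$. Conclusions (2) and (3) are then immediate: $f_{n_k}(x_0)=1$ gives $h(x_0)=1$, while $|f_{n_k}|\le R$ and the Weierstrass test give $\sup_X|h-F_N|\le R\,s^{N+1}\to 0$, so $F_N\to h$ uniformly and $h\in C(X)$.

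The real content is (1): $|h(x)|<1$ for $x\ne x_0$. Since $|h(x)|\le 1-(1-s)\sum_k s^k(1-|f_{n_k}(x)|)$, it suffices to prove $\sum_{k\ge 0}s^k(1-|f_{n_k}(x)|)>0$. Fix $x\ne x_0$; by the third hypothesis (and $n_k\to\infty$) there is $K=K(x)$ with $|f_{n_k}(x)|<1$ for all $k\ge K$. If $|f_{n_k}(x)|\le 1$ for every $k$, then all summands are $\ge 0$ and the one at $k=K$ is strictly positive, so the series is positive. Otherwise $|f_{n_{k_0}}(x)|=1+\eta$ for some $k_0$ and some $\eta>0$; since $\epsilon_j\downarrow 0$, for every $l$ with $l-1$ large enough (so that $\epsilon_{l-1}<\eta$ and $l-1\ge k_0$) the point $x$ lies in $\{|f_{n_{k_0}}|\ge 1+\epsilon_{l-1}\}$, whence $|f_{n_l}(x)|\le r$; therefore $L=L(x):=\min\{l:|f_{n_k}(x)|\le r\text{ for all }k\ge l\}$ is finite. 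The key point is that, by minimality of $L$, $|f_{n_{L-1}}(x)|>r$, and hence the construction forces $|f_{n_k}(x)|\le 1+\epsilon_{L-2}$ for all $k\le L-2$: indeed, were $|f_{n_k}(x)|\ge 1+\epsilon_{L-2}=1+\epsilon_{(L-1)-1}$ for such a $k$, the displayed consequence with $i=k$, $l=L-1$ would yield $|f_{n_{L-1}}(x)|\le r$, a contradiction. Feeding the three bounds $|f_{n_k}(x)|\le 1+\epsilon_{L-2}$ ($k\le L-2$), $|f_{n_{L-1}}(x)|\le R$, $|f_{n_k}(x)|\le r$ ($k\ge L$) into the geometric series gives $\sum_k s^k(1-|f_{n_k}(x)|)\ge \beta\,s^{L-1}-\frac{\epsilon_{L-2}}{1-s}$, which is positive by the choice of $c$; the boundary values $L=0$ (where directly $|h(x)|\le r$) and $L=1$ (empty ``head'', and the lone term bounded by $R$ is offset by the geometric tail of terms bounded by $r$, again because $s>\frac{R-1}{R-r}$) are handled on the spot.

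I expect the main obstacle to be exactly this last estimate in the non-trivial case. One has to notice, in sequence, that only finitely many of the $|f_{n_k}(x)|$ can exceed $1$ at all; that, by minimality of $L(x)$, all of them except the single index $L-1$ can exceed $1$ only by the tiny amount $\epsilon_{L-2}$; and that letting the thresholds $\epsilon_j$ decay geometrically at a rate calibrated to $\beta$ makes the accumulated small errors negligible against $s^{L-1}$. Everything else — the inductive construction, the uniform convergence, and the value at $x_0$ — is routine once the thresholds $\epsilon_j$ have been fixed in advance.
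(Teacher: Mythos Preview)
Your argument is correct and follows essentially the same Bishop $\tfrac14$--$\tfrac34$ scheme as the paper: choose thresholds $\epsilon_j\searrow 0$ calibrated to the gap $s(R-r)-(R-1)>0$, extract a subsequence so that each later term is $\le r$ wherever an earlier one exceeds $1+\epsilon_j$, and then bound $|h(x)|$ by splitting the geometric sum into a head ($\le 1+\epsilon$), one bad term ($\le R$), and a tail ($\le r$). The only cosmetic differences are that you take $\epsilon_j=cs^j$ explicitly while the paper leaves the choice implicit, and you locate the transition via $L(x)=\min\{l:|f_{n_k}(x)|\le r\ \text{for all }k\ge l\}$ rather than via the sets $W_\nu=\{\max_{j\le\nu}|h_j|\ge 1+\epsilon_\nu\}$; in fact your use of the third hypothesis to secure the \emph{strict} inequality $|h(x)|<1$ when all $|f_{n_k}(x)|\le 1$ is spelled out more carefully than in the paper.
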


\begin{proof} Fix $s\in (\frac{R-1}{R-r},1)$.
Then $R-1+s(r-R)<0$. Choose a sequence
$\epsilon_\nu\searrow0$ such that
$$
\epsilon_{\nu-1}(1-s^\nu)+s^\nu\big(R-1+s(r-R)\big)<0,\quad \nu\ge1.
$$
Now we are going to construct inductively a sequence of holomorphic
functions $\{h_\nu\}$. Put $h_0=1$ and $h_1=f_1$. Assume that $h_0,h_1,\dots,h_\nu$ are constructed. Put
$$
W_\nu=\{x\in X:\max_{1\le j\le \nu}|h_j(x)|\ge 1+\epsilon_\nu\}.
$$
By the assumptions there exists an $m_\nu$ such that for $h_{\nu+1}=f_{m_\nu}$ we have:
\begin{itemize}
\item $|h_{\nu+1}|\le R$ on $X$;
\item $h_{\nu+1}(x_0)=1$;
\item $|h_{\nu+1}|\le r$ on $W_\nu$.
\end{itemize}
Put $h=(1-s)\sum_{j=0}^\infty s^j h_j$. Note that $h(x_0)=1$.
Let us show that $|h|\le1$ on $X$.

Assume that $x\not\in\cup_\nu W_\nu$. Then $|h_\nu(x)|\le 1$ for any $\nu\in\NN$. Hence, $|h(x)|\le 1$.

Note that $W_1\subset W_2\subset\dots$. Now assume that
$x\in W_\nu\setminus W_{\nu-1}$ (we put $W_0=\varnothing$). Then
\begin{itemize}
\item $|h_j(x)|\le R+\epsilon_{\nu-1}$ for $1\le j\le \nu-1$;
\item $|h_\nu(x)|\le 1$;
\item $|h_j(x)|\le r$ for $j>\nu$.
\end{itemize}
So, we have
$$
|h(x)|\le\frac{1-s}{R}\Big\{ (R+\epsilon_{\nu-1})\sum_{j=0}^{\nu-1} s^j+
s^\nu+r\sum_{j=\nu+1}^\infty s^j\Big\}<1.
$$
Note that  $h$ is a non-constant function and that $h(x_0)=1$.
\end{proof}

Recall the definition of the analytic capacity. Let $\widehat\CC=\CC\cup\{\infty\}$ denote the Riemann sphere. The analytic capacity of a compact set $K$ is defined by 
$$
\gamma(K)=\sup\{|f'(\infty)|: f\in\OO(\Omega), \|f\|\le1, f(\infty)=0\},
$$
where $\Omega$ is the unbounded component of $\widehat\CC\setminus K$ and $f'(\infty)=\lim_{z\to\infty} zf(z)$. 
For any set $F\subset\CC$ we put
$$
\gamma(F)=\sup\{\gamma(K): K\subset F\text{ compacts}\}.
$$
We have the following elementary result (c.f. Corollary 1.8 in Chapter VIII \cite{Gam})
\begin{prop}\label{prop:6} If $K\subset\CC$ is a compact set then
$$
\gamma(K)=\inf\{\gamma(U): K\subset U\text{ open}\}.
$$
\end{prop}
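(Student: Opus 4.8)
The plan is to prove the two inequalities separately, the inequality $\gamma(K)\le\inf\{\gamma(U):K\subset U\text{ open}\}$ being essentially formal. If $K'\subset K$ are compact, then the component of $\widehat\CC\setminus K$ containing $\infty$ is contained in the corresponding component for $K'$, so restricting admissible functions gives $\gamma(K')\le\gamma(K)$; hence $\gamma(K)=\sup\{\gamma(K'):K'\subset K\text{ compact}\}$ and, since every compact subset of $K$ lies in every open $U\supset K$, the definition of $\gamma$ on sets yields $\gamma(K)\le\gamma(U)$ for all such $U$.

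For the reverse inequality I would argue by contradiction: suppose that for some $\eps>0$ every open $U\supset K$ satisfies $\gamma(U)>\gamma(K)+\eps$. Applying this to the shrinking neighbourhoods $U_n=\{z:\dist(z,K)<1/n\}$ and using $\gamma(U_n)=\sup\{\gamma(K'):K'\subset U_n\text{ compact}\}$, I obtain compacts $K_n\subset U_n$ with $\gamma(K_n)>\gamma(K)+\eps$, and then admissible functions $f_n$, holomorphic on the component $\Omega(K_n)$ of $\widehat\CC\setminus K_n$ through $\infty$, with $\|f_n\|\le1$, $f_n(\infty)=0$ and $|f_n'(\infty)|>\gamma(K)+\eps$. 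The goal is to manufacture out of the $f_n$ a single admissible function for $K$ whose derivative at $\infty$ has modulus at least $\gamma(K)+\eps$, contradicting the definition of $\gamma(K)$.

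The key step is a topological lemma. Put $L_n=\overline{U_n}$; these are compact, $L_1\supset L_2\supset\cdots$, and $\bigcap_nL_n=K$. Writing $\Omega(L)$ for the component of $\widehat\CC\setminus L$ containing $\infty$, I claim $\Omega(K)=\bigcup_n\Omega(L_n)$. The inclusion ``$\supset$'' is clear since $L\subset L_n$; for ``$\subset$'', given $z\in\Omega(K)$ join it to a point of sufficiently large modulus by a compact connected set $P\subset\Omega(K)$; then $P$ is disjoint from $K=\bigcap_nL_n$, so by the finite intersection property $P\cap L_N=\varnothing$ for some $N$, which forces $P\subset\Omega(L_N)$ and hence $z\in\Omega(L_N)$. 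Now, since $K_m\subset U_m\subset U_n\subset L_n$ whenever $m\ge n$, every $f_m$ with $m\ge n$ is holomorphic and bounded by $1$ on $\Omega(L_n)$, and the domains $\Omega(L_n)$ increase with $n$; a diagonal application of Montel's theorem then yields a subsequence $f_{n_k}$ converging locally uniformly on $\bigcup_n\Omega(L_n)=\Omega(K)$ to a holomorphic $g$ with $\|g\|\le1$ and $g(\infty)=0$. Uniform convergence on a fixed circle near $\infty$ gives $f_{n_k}'(\infty)\to g'(\infty)$, so $|g'(\infty)|\ge\gamma(K)+\eps$ while $g$ is admissible for $K$ — the desired contradiction. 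Thus for every $\eps>0$ some open $U\supset K$ has $\gamma(U)\le\gamma(K)+\eps$, which together with the first inequality proves the proposition.

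The routine parts are the monotonicity statements and the Montel/coefficient-convergence argument. The place that needs care — and the main obstacle — is the topological lemma together with the bookkeeping that lets all the functions $f_m$, $m\ge n$, be viewed as bounded holomorphic functions on the common domain $\Omega(L_n)$ before passing to the limit; the compactness trick that a compact connected set linking a point of $\Omega(K)$ to $\infty$ must avoid some $L_N$ is exactly what makes the growing domains exhaust $\Omega(K)$. (One should also record the harmless consistency remark that the direct definition of $\gamma$ on a compact set agrees with its definition via compact subsets, which is what legitimizes the step $\gamma(U_n)\le\gamma(\overline{U_n})$ implicit above.)
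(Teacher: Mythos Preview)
The paper does not supply a proof of this proposition at all; it merely records the statement as ``elementary'' with a pointer to Gamelin (Chapter~VIII, Corollary~1.8). Your argument is correct and is essentially the standard one: monotonicity of $\gamma$ on compacts gives the easy inequality $\gamma(K)\le\inf_U\gamma(U)$, and for the reverse you extract, via Montel on the increasing exhaustion $\Omega(L_n)\nearrow\Omega(K)$, a limit function $g$ admissible for $K$ with $|g'(\infty)|\ge\gamma(K)+\eps$, a contradiction. The topological lemma (that a compact arc in $\Omega(K)$ eventually misses $L_N$) and the coefficient-convergence at $\infty$ are exactly the points that need checking, and you handle both. One cosmetic remark: the closing parenthetical about $\gamma(U_n)\le\gamma(\overline{U_n})$ is not actually invoked anywhere in your proof --- you only use $K_n\subset U_n\subset L_n$ to restrict $f_m$ to $\Omega(L_n)$, never the capacity of $L_n$ itself --- so that sentence can simply be dropped.
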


One of the main result of this section is the following Curtis type result
(cf.~Theorem 4.1 in Chapter VIII \cite{Gam}).
\begin{theorem} Let $F\subset\CC$ be any subset and let $\zeta\in F$. Assume that
$$
\limsup_{r\to0+}\frac{\gamma(\DD(\zeta;r)\setminus F)}{r}>0.
$$
Then $\zeta$ is a peak point for $R(F)$.
\end{theorem}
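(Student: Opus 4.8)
The plan is to construct a sequence of functions $f_\nu \in R(F)$ satisfying the hypotheses of Theorem~\ref{theorem:10a} (with $X = F$, $x_0 = \zeta$), and then invoke that theorem to produce the peak function. The raw material comes from the analytic capacity hypothesis: by the definition of $\limsup$, there is a sequence $r_\nu \searrow 0$ and a constant $c > 0$ with $\gamma(\DD(\zeta;r_\nu)\setminus F) \ge c\, r_\nu$ for all $\nu$. Using Proposition~\ref{prop:6} (in the form that capacity of a compact set is approximated from inside, together with the fact that $\gamma$ is attained up to a fixed constant by functions on actual compact subsets), I can pick compact sets $K_\nu \subset \DD(\zeta;r_\nu)\setminus F$ with $\gamma(K_\nu)$ comparable to $r_\nu$, and corresponding Ahlfors/extremal functions $g_\nu \in \OO(\widehat\CC \setminus K_\nu)$ with $\|g_\nu\| \le 1$, $g_\nu(\infty) = 0$, and $g_\nu'(\infty)$ comparable to $r_\nu$. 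Each $g_\nu$ is holomorphic in a neighborhood of $F$ (since $K_\nu$ is disjoint from $F$), hence lies in $R(F)$ after restriction.

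Next I would convert the $g_\nu$ into functions that are close to $1$ at $\zeta$ and small away from it. The key estimate is that for $z$ with $|z - \zeta|$ large compared to $r_\nu$ (so $z$ is far from $K_\nu$), one has $|g_\nu(z)| \lesssim \gamma(K_\nu)/|z - \zeta| \lesssim r_\nu / |z-\zeta|$, which is small; whereas a suitable normalization makes $|g_\nu|$ bounded below near $\zeta$. Concretely I expect to form something like $f_\nu(z) = 1 - c_\nu\big(g_{m}(z) - g_{m}(\zeta')\big)$ or, more in the spirit of the Curtis construction, to build $f_\nu$ from $\re g_\nu$ or from $\exp$ of an appropriate multiple of $g_\nu$ so that $f_\nu(\zeta) = 1$ exactly, $|f_\nu| \le R$ for a fixed $R$, and $|f_\nu(z)| < 1$ eventually for each fixed $z \ne \zeta$. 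The hypothesis $\sum$-type divergence is not needed here — a single lower bound $\gamma(\DD(\zeta;r_\nu)\setminus F) \ge c r_\nu$ along a sequence suffices — which is why only a $\limsup$ assumption appears. The fourth bullet of Theorem~\ref{theorem:10a}, the "contraction" condition that $|f_m|$ is uniformly small on $\{|f_k| \ge 1 + \epsilon\}$, should follow because $\{|f_k| \ge 1+\epsilon\}$ forces $z$ to lie in a small disk $\DD(\zeta; \rho_k)$ with $\rho_k \to 0$ as $k \to \infty$, and then for $m$ large $r_m \ll \rho_k$ makes $|f_m(z)|$ close to its value "far from $K_m$", which we can take below any prescribed $r < 1$.

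The main obstacle I anticipate is the geometric bookkeeping in matching scales: one must choose, for each $k$, how close to $\zeta$ the set $\{|f_k| \ge 1+\epsilon\}$ is squeezed, and then choose $r_m$ small enough relative to that scale — this is a diagonal/inductive selection, but it is exactly the kind of selection Theorem~\ref{theorem:10a} is designed to absorb, so the burden reduces to proving the two quantitative estimates on the Ahlfors functions: (a) a lower bound for $|g_\nu|$ (equivalently for $|f_\nu - 1|$ being controlled) on a definite neighborhood of $\zeta$, coming from $g_\nu'(\infty) \gtrsim r_\nu$ and the Schwarz-type/Cauchy estimates, and (b) the decay $|g_\nu(z)| \lesssim r_\nu/|z-\zeta|$ for $z$ outside a fixed multiple of $\DD(\zeta;r_\nu)$. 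Both are standard properties of analytic capacity and its extremal functions. Once these are in hand, the functions $f_\nu$ verify all four bullets of Theorem~\ref{theorem:10a}, and the resulting $h$ is the desired peak function for $R(F)$, completing the proof.
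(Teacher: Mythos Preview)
Your overall strategy matches the paper's exactly: pick $r_j\searrow 0$ with $\gamma(\DD(\zeta;r_j)\setminus F)\ge \alpha\, r_j$, choose compacts $K_j\subset \DD(\zeta;r_j)\setminus F$ and near-extremal $f_j\in\OO(\widehat\CC\setminus K_j;\DD)$ with $f_j(\infty)=0$, $f_j'(\infty)\gtrsim r_j$, then feed a suitable normalization into Theorem~\ref{theorem:10a}. The gap is precisely in that normalization step. None of your three candidates works: for $1-c_\nu(g_\nu(z)-g_\nu(\zeta))$ you have no control over $g_\nu(\zeta)$ (the point $\zeta$ sits right next to $K_\nu$), so for fixed $z\ne\zeta$ the value need not eventually drop below $1$; taking $\re g_\nu$ destroys membership in $R(F)$; and an exponential does not give the exact value $1$ at $\zeta$ with the required uniform bound. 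The paper's device is to multiply by $(z-\zeta)$: set
\[
g_j(z)=\frac{(z-\zeta)\,f_j(z)}{f_j'(\infty)},\qquad h_j=1-g_j.
\]
Then $h_j(\zeta)=1$ automatically, $g_j(\infty)=1$, and the maximum principle on $\{|z-\zeta|\ge r_j\}$ gives $|g_j|\le r_j/f_j'(\infty)\le 2/\alpha$, hence $|h_j|\le 1+2/\alpha$ everywhere. A normal-families argument on $\widehat\CC\setminus\{\zeta\}$ then forces $g_j\to 1$, i.e.\ $h_j\to 0$, locally uniformly there.

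Your verification of the fourth bullet is also inverted. Since $h_k(\zeta)=1$ and $|g_k(z)|\le |z-\zeta|/f_k'(\infty)$, the set $\{|h_k|\ge 1+\epsilon\}$ lies \emph{outside} a fixed disk about $\zeta$ (and away from $\infty$, since $h_k(\infty)=0$); it is not squeezed toward $\zeta$. It is therefore contained in a compact subset of $\widehat\CC\setminus\{\zeta\}$, on which the locally uniform convergence $h_m\to 0$ yields $|h_m|\le r$ for $m$ large. With the normalization $h_j=1-(z-\zeta)f_j/f_j'(\infty)$ and this corrected geometry, your outline becomes the paper's proof.
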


\begin{proof}
There exists a sequence of holomorphic functions $f_j:\Omega_j\to\DD$ such that $f_j(\infty)=0$ and
$$
\lim_{j\to\infty} \frac{f'_j(\infty)}{r_j}>0,
$$
where $\Omega_j=\widehat\CC\setminus K_j$.
Note that $\cup_{j=1}^\infty\Omega_j=\widehat\CC\setminus\{\zeta\}$. Consider a sequence of functions
$$
g_j(z)=\frac{(z-\zeta)f_j(z)}{f'_j(\infty)},\quad z\in\Omega_j.
$$
Fix for a while $j$. From the maximum pronciple, for any $z\in\Omega_j$ with $|z-\zeta|\ge r_j$ we have $|g_j(z)|\le \frac{r_j}{f_j'(\infty)}$. Hence, 
$|g_j(z)|\le \frac2\alpha$ for any $z\in\Omega_j$ and big enough $j$. Moreover, $g_j(\infty)=1$. Passing to a subsequence, we can assume that the sequence $g_j$ converges to a bounded holomorphic function $g$ on $\widehat\CC\setminus\{\zeta\}$. Hence, $g\equiv1$. Put $h_j=1-g_j$. Then
$h_j(\zeta)=1$, $|h_j|\le 1+\frac2\alpha$, and for any $z\in\widehat\CC\setminus\{\zeta\}$ we have $h_j(z)\to0$ when $j\to\infty$.
Now we use Theorem~\ref{theorem:10a} to construct a peak function.
\end{proof}

Using the relation between the analytic capacity and the Lebesgue measure on the complex plane (see \cite{Gam}) we get.
\begin{corollary}\label{cor:6a}
Let $F\subset\CC$ be a Borel set and let $\zeta\in F$. Assume that
$$
\limsup_{r\to0+} \frac{\LL(\DD(\zeta;r)\setminus F)}{r^2}>0.
$$
Then $\zeta$ is a peak point for $R(F)$.
\end{corollary}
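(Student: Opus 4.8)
The plan is to deduce Corollary~\ref{cor:6a} from the preceding Curtis-type theorem by showing that a lower bound on the Lebesgue density of $\CC\setminus F$ at $\zeta$ forces a lower bound on the analytic-capacity density at $\zeta$. Concretely, I would invoke the standard comparison between analytic capacity and area: there is an absolute constant $c>0$ such that for every compact $K\subset\CC$ one has $\gamma(K)\ge c\,\LL(K)^{1/2}$. (This is the elementary half of the relation between $\gamma$ and $\LL$ alluded to just before the statement; it comes, e.g., from testing the extremal problem with the Cauchy transform $f(z)=\frac{1}{2\pi\,\mathrm{i}}\int_K\frac{\,d\LL(w)}{w-z}$, or simply from the fact that $\gamma$ dominates a fixed multiple of the one-dimensional Hausdorff content, which in turn dominates $\LL^{1/2}$.) Using Proposition~\ref{prop:6} one passes freely between compact subsets and the Borel set $\DD(\zeta;r)\setminus F$, so $\gamma$ of the latter is controlled below by $c\,\LL(\DD(\zeta;r)\setminus F)^{1/2}$.

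First I would fix the hypothesis: there are $\delta>0$ and a sequence $r_j\searrow0$ with $\LL(\DD(\zeta;r_j)\setminus F)\ge\delta\,r_j^2$ for all $j$. Second, I would apply the area–capacity inequality together with the inner-regularity of $\gamma$ (Proposition~\ref{prop:6}, in the form $\gamma(F)=\sup\{\gamma(K):K\subset F\text{ compact}\}$ extended from compacts to Borel sets) to get
\begin{equation*}
\gamma\bigl(\DD(\zeta;r_j)\setminus F\bigr)\ \ge\ c\,\LL\bigl(\DD(\zeta;r_j)\setminus F\bigr)^{1/2}\ \ge\ c\,\sqrt{\delta}\,r_j.
\end{equation*}
Third, dividing by $r_j$ and letting $j\to\infty$ yields
\begin{equation*}
\limsup_{r\to0+}\frac{\gamma(\DD(\zeta;r)\setminus F)}{r}\ \ge\ c\,\sqrt{\delta}\ >\ 0,
\end{equation*}
which is exactly the hypothesis of the Curtis-type theorem. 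Fourth, that theorem gives a function in $R(F)$ peaking at $\zeta$, i.e. $\zeta$ is a peak point for $R(F)$, as required.

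The only real point requiring care — the "main obstacle" in an otherwise routine argument — is justifying the inequality $\gamma(K)\gtrsim\LL(K)^{1/2}$ and making sure the regularity passage from compact sets to the Borel set $\DD(\zeta;r)\setminus F$ is legitimate; both are classical (the first is in Garnett~\cite{Gam}, and the second is Proposition~\ref{prop:6} together with the definition of $\gamma$ on arbitrary sets as a supremum over compact subsets). I would state the area comparison as a cited fact rather than reprove it, since the excerpt explicitly defers to \cite{Gam} for "the relation between the analytic capacity and the Lebesgue measure." With that in hand the corollary is immediate.
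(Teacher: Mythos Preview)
Your proposal is correct and matches the paper's approach: the paper simply says ``Using the relation between the analytic capacity and the Lebesgue measure on the complex plane (see \cite{Gam}) we get'' the corollary, and you have spelled out precisely that reduction via $\gamma(K)\ge c\,\LL(K)^{1/2}$ to the Curtis-type theorem. One minor remark: you do not actually need Proposition~\ref{prop:6} (which gives \emph{outer} regularity of $\gamma$ for compacts); the passage to compact subsets follows directly from the definition of $\gamma$ on arbitrary sets together with inner regularity of Lebesgue measure.
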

In another words, if $\zeta$ is not a peak point for $R(F)$ then $F$ is of full measure at $\zeta$, i.e., $\lim_{r\to0+} \frac{\LL(\DD(\zeta;r)\setminus F)}{r^2}=0$.

As a corollary of Melnikov's result  (see Theorem 4.5 in Chapter VIII in \cite{Gam})
 and Bishops's characterization of a peak point for a compact set  (see e.g. Theorem 2.1 in \cite{Zal})
we have
\begin{theorem}
Let $\Omega\subset\CC$ be any domain and let $\zeta\in\partial\Omega$. 
Assume that
$$
\sum_{n=1}^\infty\frac{\gamma(A_n(\zeta;a)\setminus\Omega)}{a^n}<+\infty.
$$
Then there exists a compact set $K\subset\Omega\cup\{\zeta\}$ such that $\zeta$ is not a peak point for $R(K)$, and, therefore,
there exists a Borel probability measure $\mu$ with support in $K$ such that
$$
f(\zeta)=\int_{\Omega} f d\mu\quad\text{ for any }f\in R(\Omega\cup\{\zeta\}).
$$
\end{theorem}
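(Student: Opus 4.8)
The plan is to deduce the statement from Melnikov's peak-point criterion applied to a carefully chosen compact subset of $\Omega\cup\{\zeta\}$. Recall that by Melnikov's criterion (Theorem~4.5 in Chapter~VIII of \cite{Gam}), if a compact set $K\subset\CC$ and a point $\zeta\in K$ satisfy $\sum_{n\ge1}\gamma(A_n(\zeta;a)\setminus K)/a^n<\infty$, then $\zeta$ is not a peak point for $R(K)$; and by Bishop's characterization (Theorem~2.1 in \cite{Zal}), whenever $\zeta$ is not a peak point for $R(K)$ there is a Borel probability measure $\mu$ on $K$ with $\mu(\{\zeta\})=0$ and $f(\zeta)=\int f\,d\mu$ for all $f\in R(K)$. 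Thus it suffices to produce a compact $K$ with $K\subset\Omega\cup\{\zeta\}$ and $\sum_{n\ge1}\gamma(A_n(\zeta;a)\setminus K)/a^n<\infty$: then $\supp\mu\subset K\subset\Omega\cup\{\zeta\}$ and $\mu(\{\zeta\})=0$, so $\mu(\CC\setminus\Omega)=0$; and since, by the very definition of $R$, every $f\in R(\Omega\cup\{\zeta\})=\OO(\Omega)\cap C(\Omega\cup\{\zeta\})$ is a uniform limit on $\Omega\cup\{\zeta\}$ of functions holomorphic near $\Omega\cup\{\zeta\}$, its restriction to $K$ lies in $R(K)$, whence $f(\zeta)=\int f\,d\mu=\int_\Omega f\,d\mu$, the representation we want.

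To build $K$ I would fatten the trace of $\CC\setminus\Omega$ on each annulus by a thin open set, invoking the outer regularity of analytic capacity (Proposition~\ref{prop:6}). For $n\ge1$ put $L_n:=A_n(\zeta;a)\setminus\Omega$; this set is compact and is compactly contained in the open annulus $V_n:=\{z:a^{n+3/2}<|z-\zeta|<a^{n-1/2}\}$. Using Proposition~\ref{prop:6} and the monotonicity of $\gamma$ (intersect the approximating open set with $V_n$), pick an open $U_n$ with $L_n\subset U_n\subset V_n$ and $\gamma(U_n)\le 2\gamma(L_n)+a^{2n}$, and set
$$
K:=\overline{\DD(\zeta;a)}\setminus\bigcup_{n\ge1}U_n .
$$
Then $K$ is compact; $\zeta\in K$ because no $U_n$ contains $\zeta$; and $K\subset\Omega\cup\{\zeta\}$, since any $z\in K$ with $z\ne\zeta$ satisfies $0<|z-\zeta|\le a$, hence lies in some $A_n(\zeta;a)$, and $z\notin U_n\supset L_n$ then forces $z\in\Omega$.

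Finally one estimates the Melnikov sum of $K$. As $U_m\subset V_m$, a given $A_n(\zeta;a)$ can meet only $U_{n-1},U_n,U_{n+1}$, so $A_n(\zeta;a)\setminus K\subset U_{n-1}\cup U_n\cup U_{n+1}$ (with the convention $U_0:=\varnothing$); by the semiadditivity of analytic capacity, $\gamma(A_n(\zeta;a)\setminus K)\le C\big(\gamma(U_{n-1})+\gamma(U_n)+\gamma(U_{n+1})\big)$ for an absolute constant $C$, and summing against $a^{-n}$ and shifting the index bounds the series by a constant multiple of $\sum_{n\ge1}\big(\gamma(L_n)+a^{2n}\big)/a^n$, which is finite by hypothesis. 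This last inequality is the main obstacle: it rests on the semiadditivity of $\gamma$ over the boundedly overlapping annuli $V_m$ — the only non-elementary ingredient — whereas compactness of $K$, the inclusion $K\subset\Omega\cup\{\zeta\}$, and the transfer of the representing measure from $R(K)$ to $R(\Omega\cup\{\zeta\})$ are routine.
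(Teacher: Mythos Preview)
Your proposal is correct and follows exactly the route the paper indicates: the paper gives no proof beyond the sentence ``As a corollary of Melnikov's result \dots\ and Bishop's characterization \dots\ we have,'' and your argument is precisely an unpacking of that corollary --- construct a compact $K\subset\Omega\cup\{\zeta\}$ whose Melnikov series converges, then apply Melnikov's criterion and Bishop's peak-point theorem. The one substantive detail you add (and flag) is the appeal to Tolsa's semiadditivity of $\gamma$ to control $\gamma(A_n\setminus K)$ via the three overlapping $U_m$'s; this is a heavier tool than anything cited in the paper, but it is valid, and the paper, having suppressed all details, offers no lighter alternative to compare with.
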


Let us finish this section with a generalization of a part of Melnikov's result.
\begin{theorem}\label{thm:8i} Let $F\subset\CC$ be any subset and let $\zeta\in F$. Let $a\in(0,1)$. If 
$$
\sum_{n=1}^\infty\frac{\gamma(A_n(\zeta,a)\setminus F)}{a^n}=+\infty
$$
then $\zeta$ is a peak point for $R(F)$.
\end{theorem}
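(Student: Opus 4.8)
The plan is to reduce the general set $F$ to the case of a compact set, so that the already-quoted version of Melnikov's theorem (Theorem~4.5 in Chapter~VIII of \cite{Gam}) together with Bishop's characterization of peak points for compacta (Theorem~2.1 in \cite{Zal}) can be applied. First I would note that it suffices to find, for each fixed $a\in(0,1)$, a compact set $K\subset F$ with $\zeta\in K$ and $\sum_{n}\gamma(A_n(\zeta,a)\setminus K)/a^n=+\infty$; indeed, a peak function for $R(K)$ restricts to a peak function for $R(F)$ at $\zeta$ once we know $K\subset F$, because $R(F)\to R(K)$ is the restriction map and $|f|<1$ on $F\setminus\{\zeta\}$ forces $|f|<1$ on $K\setminus\{\zeta\}$. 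Wait — that inclusion goes the wrong way: a peak function for the \emph{smaller} set need not come from the larger one. So instead I would work with the \emph{complement}: I want a compact $K$ with $\zeta\in K\subset \CC$, $K\cap\Omega$ irrelevant, such that $\CC\setminus K\supset$ a large chunk of $\CC\setminus F$ near $\zeta$, and then transfer a peak point of $R(K)$ back. The clean route is: produce compacts $L_n\subset A_n(\zeta,a)\setminus F$ with $\gamma(L_n)$ close to $\gamma(A_n(\zeta,a)\setminus F)$ (possible by the definition of $\gamma$ on arbitrary sets and by Proposition~\ref{prop:6}), set $E=\{\zeta\}\cup\bigcup_n L_n$, and let $K=\overline{\DD(\zeta;a)}\setminus\bigcup_n \interior(L_n')$ for suitable open $L_n'\supset L_n$ — arranging $E\subset\CC\setminus K$ near $\zeta$ while keeping $K$ compact and $\zeta\in K$.

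Concretely, the key steps in order are: (1) For each $n$ pick a compact $L_n\subset A_n(\zeta,a)\setminus F$ with $\gamma(L_n)\ge \tfrac12\gamma(A_n(\zeta,a)\setminus F)$; then still $\sum_n \gamma(L_n)/a^n=+\infty$. (2) Set $K=\overline{\DD(\zeta;1)}\setminus\bigl(\bigcup_n U_n\bigr)$ where $U_n$ is a small open neighborhood of $L_n$, chosen small enough that $\gamma(\overline{U_n}\setminus L_n)$ contributes nothing to the series (via Proposition~\ref{prop:6}, $\gamma(\overline{U_n})\le \gamma(L_n)+2^{-n}$) and that the $\overline{U_n}$ are pairwise disjoint and disjoint from $\zeta$; then $K$ is compact, $\zeta\in K$, and $A_n(\zeta,a)\setminus K\supset L_n$, so $\sum_n\gamma(A_n(\zeta,a)\setminus K)/a^n=+\infty$. (3) Apply Melnikov's theorem (Theorem~4.5, Ch.~VIII, \cite{Gam}): the divergence of this Melnikov series at $\zeta$ relative to the compact $K$ means that $\CC\setminus K$ is ``thick'' at $\zeta$ in the sense that guarantees, via Bishop's theorem (Theorem~2.1 in \cite{Zal}), that $\zeta$ is a peak point for $R(K)$. (4) Finally transfer back: since $U_n\cap F=\varnothing$ for all $n$ and $U_n\subset A_n(\zeta,a)$, the set $\Omega\cup\{\zeta\}=F$ is contained in $K$ in a neighborhood of $\zeta$ (shrink $U_n$ so that $U_n\subset A_n$ strictly), hence near $\zeta$ the set $F$ sits inside $K$; a peak function $g\in R(K)$ for $\zeta$ then restricts to an element of $R(F\cap\overline{\DD(\zeta;a)})$ that peaks at $\zeta$, and by Theorem~\ref{theorem:10a} (or a direct localization argument multiplying $g$ by a cutoff-type holomorphic factor) one extends this to a genuine peak function in $R(F)$.

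The main obstacle I expect is step~(4), the localization: $\zeta$ being a peak point for $R(K)$ is a statement about the \emph{compact} $K$, whereas $F$ may be unbounded and need not even be closed, and $F$ agrees with $K$ only near $\zeta$. One must turn a local peak function into a global one on $F$. This is exactly the kind of situation Theorem~\ref{theorem:10a} is built to handle: take $g_\nu$ to be fixed powers or rescalings of the local peak function composed with a Möbius map pushing the peak value to $1$, verify the four hypotheses (uniform bound $R$, value $1$ at $\zeta$, eventual strict inequality off $\zeta$, and the ``collapsing on superlevel sets'' condition), and read off the global peak function. The second, more technical obstacle is making the capacity bookkeeping in steps~(1)–(2) precise: one needs $\gamma$ to behave well under the approximations $L_n\uparrow$ and $U_n\downarrow$, which is supplied by the definition of $\gamma$ on arbitrary sets as a supremum over compact subsets together with Proposition~\ref{prop:6}, plus the standard (sub)additivity/monotonicity properties of analytic capacity from \cite{Gam}; none of this is deep, but it must be arranged so that the divergent series survives both operations, and so that the perturbed annular pieces remain inside the correct annuli $A_n(\zeta,a)$.
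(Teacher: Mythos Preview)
Your detour through an auxiliary compact set $K$ creates a genuine gap at step~(4), and the fix for that gap is precisely the paper's much shorter argument.

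The problem is this: once you apply Melnikov's theorem to the compact $K=\overline{\DD(\zeta;1)}\setminus\bigcup_n U_n$ as a black box, the peak function $g$ you obtain lives in $R(K)$, meaning it is a uniform limit \emph{on $K$} of functions holomorphic on neighborhoods of $K$. Those neighborhoods need not contain $F$ at all when $F$ extends beyond $\overline{\DD(\zeta;1)}$, so there is no restriction map $R(K)\to R(F)$ to exploit. Your two proposed remedies both fail for the same reason: a ``cutoff-type holomorphic factor'' does not exist in one complex variable, and Theorem~\ref{theorem:10a} requires a sequence of functions already defined and bounded on all of $F$, whereas $g$ and its powers are only defined on $K$. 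So as written, step~(4) does not go through, and the second paragraph of your proposal does not close the gap.

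The paper's route bypasses this entirely by never forming $K$. Your step~(1) already produces compacts $L_n\subset A_n(\zeta,a)\setminus F$ and Ahlfors-type functions $f_n:\widehat\CC\setminus L_n\to\DD$ with $f_n(\infty)=0$ and $f_n'(\infty)\ge\tfrac12\gamma(A_n(\zeta,a)\setminus F)$. The crucial observation is that $L_n\cap F=\varnothing$, so each $f_n$ is holomorphic on a neighborhood of $F$ and hence lies in $R(F)$ \emph{globally}, with no localization needed. From here one simply reruns the combinatorics on page~206 of \cite{Gam} (the sufficiency half of Melnikov's criterion) verbatim with these $f_n$; the resulting peak function is a uniform limit of finite combinations of the $f_n$ and therefore belongs to $R(F)$. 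In other words, the correct move is to open the Melnikov black box rather than to wrap a compact set around it; once you do, steps~(2)--(4) of your plan become unnecessary.
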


\begin{proof} The proof essentially is the same as in case of a compact set $F$ (see sufficiency part in Melnikov's criterion, Theorem VIII.4.5 in \cite{Gam}). Indeed, from the definition it follows that for any $n\in\NN$ there exists a compact set $K_n\subset A_n(\zeta,a)\setminus F)$ and a holomorphic function $f_n:\widehat\CC\setminus K_n\to\DD$ such that $f_n(\infty)=0$ and $f_n'(\infty)\ge\frac12\gamma(A_n(\zeta,a)\setminus F)$. Having the family $\{f_n\}_{n\in\NN}$, we just repeat the arguments given on page 206 in \cite{Gam}.
\end{proof}

\section{Proof of Theorem~\ref{thm:1}}

We denote by $\MM$ the set of all positive Borel finite measures in $\CC$. For $\mu\in\MM$ we define its Newton potential as $M(z)=M_{\mu}(z)=\int\frac{1}{|w-z|}d\mu(w)$. 

For any $\zeta,\eta\in\CC$ and any $r>0$ we set
$$
F_{r}(\eta)=\frac{1}{\pi r^2}\int_{\DD(\zeta;r)}\left|\frac{z-\zeta}{z-\eta}\right|d\LL(z),
$$
where $d\LL$ is the Lebesgue measure in $\CC$. We have the estimate
$$
F_r(\eta)\le\frac{1}{\pi r}\int_{\DD(\zeta,r)}\frac{1}{|z-\eta|}d\LL(z)\le 2.
$$ 
Hence, $\int F_r(\eta)d\mu(\eta)\le 2\mu(\CC)$. So, 
\begin{equation}\label{eq:123}
\frac{1}{\pi r^2}\int_{\DD(\zeta,r)}|z-\zeta|\cdot M(z)d\LL(z)\le2\mu(\CC),
\end{equation}
and, therefore, $M<\infty$ a.e. on $\CC$. The following result, which essentially is a corollary of Fubini's theorem, shows the behaviour of the left side of \eqref{eq:123} when $r\to0$ (see e.g. \cite{St1}, Lemma 26.16).
\begin{prop}\label{prop:11} Let $\mu\in\MM$. For any $\zeta\in\CC$ we have
$$
\lim_{r\to0}\frac{1}{\pi r^2}\int_{\DD(\zeta,r)}|w-\zeta|\cdot M(w)d\LL(w)=\mu(\{\zeta\}).
$$
In particular, if $\mu(\{\zeta\})=0$ then for any $\epsilon>0$ the set
$$
\Pi(\epsilon)=\{w\in\CC: |w-\zeta|\cdot M(w)>\epsilon\}
$$
is of zero density at $\zeta$, i.e.,
$$
\lim_{r\to0}\frac{\LL(\Pi(\epsilon)\cap\DD(\zeta,r))}{\pi r^2}=0.
$$
\end{prop}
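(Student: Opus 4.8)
The plan is to reduce the statement to the behaviour of the auxiliary averages $F_r$ introduced just before it, and then to pass to the limit under the integral sign by dominated convergence. First I would insert the definition of the Newton potential and apply Tonelli's theorem — all the integrands are nonnegative — to interchange the two integrations: for every $r>0$,
$$
\frac{1}{\pi r^2}\int_{\DD(\zeta,r)}\abs{w-\zeta}\cdot M(w)\,d\LL(w)
=\int\left(\frac{1}{\pi r^2}\int_{\DD(\zeta,r)}\frac{\abs{w-\zeta}}{\abs{\eta-w}}\,d\LL(w)\right)d\mu(\eta)
=\int F_r(\eta)\,d\mu(\eta).
$$
Hence it suffices to show that $\int F_r\,d\mu\to\mu(\{\zeta\})$ as $r\to0$.

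Next I would identify the pointwise limit of $F_r$. If $\eta=\zeta$ the integrand defining $F_r(\zeta)$ is identically $1$, so $F_r(\zeta)=1$ for every $r>0$. If $\eta\neq\zeta$, put $d=\abs{\eta-\zeta}>0$; then for $r<d/2$ and $w\in\DD(\zeta,r)$ we have $\abs{\eta-w}\ge d-r>d/2$ and $\abs{w-\zeta}\le r$, so $F_r(\eta)\le 2r/d\to0$. Thus $F_r\to\mathbf{1}_{\{\zeta\}}$ pointwise on $\CC$.

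Now comes the only step that needs a word of justification, the exchange of limit and integral. This is immediate from the dominated convergence theorem: the estimate $0\le F_r\le2$ already recorded in the excerpt supplies a $\mu$-integrable majorant (the constant $2$, integrable because $\mu\in\MM$ is finite), so $\int F_r\,d\mu\to\int\mathbf{1}_{\{\zeta\}}\,d\mu=\mu(\{\zeta\})$, which proves the first assertion. I do not expect a genuine obstacle here; the content is just the combination of Tonelli, an elementary geometric estimate, and dominated convergence.

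Finally, for the ``in particular'' clause I would invoke Chebyshev's inequality. Assuming $\mu(\{\zeta\})=0$, the quantity on the left of the displayed identity tends to $0$. Fix $\epsilon>0$; since $\abs{w-\zeta}\,M(w)\ge\epsilon$ on $\Pi(\epsilon)$ and the integrand is nonnegative,
$$
\epsilon\cdot\frac{\LL\bigl(\Pi(\epsilon)\cap\DD(\zeta,r)\bigr)}{\pi r^2}
\le\frac{1}{\pi r^2}\int_{\DD(\zeta,r)}\abs{w-\zeta}\cdot M(w)\,d\LL(w)\longrightarrow0
\quad(r\to0),
$$
which yields the asserted zero density of $\Pi(\epsilon)$ at $\zeta$.
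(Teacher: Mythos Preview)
Your argument is correct and matches the paper's approach: the paper does not spell out a proof but simply records the statement as ``essentially a corollary of Fubini's theorem'' with a reference to Stout, and your Tonelli/dominated-convergence derivation via the auxiliary averages $F_r$ is precisely that corollary made explicit. The Chebyshev step for the density conclusion is likewise the intended one.
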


The proof of Theorem~\ref{thm:1} is based on the following simple observation.

\begin{prop}\label{thm:23a}
Let $F\subset\CC$ be any set and let $\zeta\in X$. Assume that $\mu$ is a Borel finite measure in $\CC$ with $\mu(\{\zeta\})=0$ such that
\begin{equation}\label{eq:9}
|f(\zeta)|\le\int_{F}|f|d\mu
\end{equation}
for any $f\in R(F)$. Then for any $\eta\in F$ we have
$$
|f(\eta)-f(\zeta)|\le 2\|f\|_{\infty} M(\eta)|\eta-\zeta|.
$$
\end{prop}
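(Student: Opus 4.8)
The plan is to fix $\eta\in F$ and apply the hypothesis \eqref{eq:9} not to $f$ itself but to a cleverly chosen modification that, at the point $\zeta$, reproduces the value $f(\eta)-f(\zeta)$, while at the same time its integral against $\mu$ is controlled by $M(\eta)|\eta-\zeta|$. The natural candidate is
$$
g(z)=\bigl(f(z)-f(\zeta)\bigr)\cdot\frac{\eta-\zeta}{\eta-z},
$$
which is continuous on $F$ (note $\eta\in F$, but $g$ extends across $z=\eta$ since $f(\eta)-f(\zeta)$ is finite — more precisely one should check $g\in R(F)$, see below), satisfies $g(\zeta)=f(\eta)-f(\zeta)$ after taking the limit $z\to\zeta$... wait, that is not right: $g(\zeta)=(f(\zeta)-f(\zeta))\cdot\frac{\eta-\zeta}{\eta-\zeta}=0$. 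Let me instead take
$$
g(z)=\bigl(f(z)-f(\zeta)\bigr)\cdot\frac{\eta-\zeta}{z-\zeta}\cdot\frac{z-\eta}{\eta-\zeta}
$$
— no. The right object is the Hartogs-type factor $g(z)=(f(z)-f(\zeta))\frac{\eta-\zeta}{z-\eta}$ evaluated at $z=\zeta$, giving $g(\zeta)=(f(\zeta)-f(\zeta))\frac{\eta-\zeta}{\zeta-\eta}=0$ again. So this particular choice gives $0$ on the left side. The correct trick is different: apply the inequality to the function whose value at $\zeta$ is exactly the difference quotient. Put
$$
g(z)=\frac{f(z)-f(\zeta)}{z-\eta}.
$$
This is holomorphic on $\Omega$ and, crucially, extends continuously to $\zeta$ (since $\zeta\neq\eta$) with $g(\zeta)=\frac{f(\zeta)-f(\zeta)}{\zeta-\eta}=0$. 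That is still $0$.

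So the mechanism must be: choose $g$ with $g(\zeta)=f(\eta)-f(\zeta)$. Take $g(z)=(f(z)-f(\zeta))\cdot\dfrac{h(z)}{h(\zeta)}$ where $h(z)=\dfrac{1}{z-\eta}$; then $g(\zeta)=0$ — the factor $f(z)-f(\zeta)$ kills it. The resolution is to NOT subtract $f(\zeta)$: set
$$
g(z)=f(z)\cdot\frac{\zeta-\eta}{z-\eta},\qquad g(\zeta)=f(\zeta),
$$
which does not help directly. The actual argument (this is the standard Gleason-part / Melnikov estimate) is: apply \eqref{eq:9} to $g(z)=\bigl(f(z)-f(\eta)\bigr)\dfrac{\zeta-\eta}{z-\eta}$. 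Then $g\in R(F)$ (the apparent pole at $z=\eta\in F$ is removable because $f$ is continuous and $f(z)-f(\eta)\to 0$), $g(\zeta)=\bigl(f(\zeta)-f(\eta)\bigr)\cdot 1=f(\zeta)-f(\eta)$, and $\|g\|_\infty\le \|f(\cdot)-f(\eta)\|_\infty\cdot\sup_{z\in F}\bigl|\frac{\zeta-\eta}{z-\eta}\bigr|$, but that sup need not be finite. Instead use the pointwise bound inside the integral:
$$
|f(\eta)-f(\zeta)|=|g(\zeta)|\le\int_F|g|\,d\mu=|\eta-\zeta|\int_F\frac{|f(w)-f(\eta)|}{|w-\eta|}\,d\mu(w)\le 2\|f\|_\infty\,|\eta-\zeta|\int_F\frac{d\mu(w)}{|w-\eta|}=2\|f\|_\infty\,|\eta-\zeta|\,M(\eta).
$$

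The key steps, then, in order: (1) Fix $\eta\in F$, $f\in R(F)$, and define $g(z)=\bigl(f(z)-f(\eta)\bigr)\dfrac{\zeta-\eta}{z-\eta}$; verify $g\in R(F)$ — here one invokes the structure theorem for $R(\Omega\cup\{\zeta\})$ quoted earlier, or simply notes that multiplication by the rational function $\frac{\zeta-\eta}{z-\eta}$ (holomorphic near $F$ once the removable singularity at $\eta$ is accounted for by the vanishing numerator) preserves the algebra $R(F)$. (2) Compute $g(\zeta)=f(\zeta)-f(\eta)$. (3) Apply hypothesis \eqref{eq:9} to $g$, and bound the integrand pointwise by $2\|f\|_\infty\frac{|\zeta-\eta|}{|w-\eta|}$ using $|f(w)-f(\eta)|\le 2\|f\|_\infty$. (4) Recognize $\int_F\frac{d\mu(w)}{|w-\eta|}=M(\eta)=M_\mu(\eta)$ by definition. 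The hypothesis $\mu(\{\zeta\})=0$ is not even needed for this pointwise estimate — it is there so that Proposition~\ref{prop:11} can later be combined with this bound; so I would remark that the conclusion holds for every $\eta\in F$ without using $\mu(\{\zeta\})=0$.

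The main obstacle I expect is the verification that $g\in R(F)$, i.e., that $g$ is again approximable by functions holomorphic near $F$. Since $f\in R(F)=\OO(\Omega)\cap C(\Omega\cup\{\zeta\})$ in the relevant case, $f-f(\eta)$ vanishes at $\eta$, and the rational factor $\frac{\zeta-\eta}{z-\eta}$ has its only pole at $\eta$; if $\eta\in\Omega$ the function $g$ is plainly holomorphic on $\Omega$ and continuous up to $\zeta$, hence in $R(F)$, and if $\eta=\zeta$ the statement is trivial ($0\le 0$). One should also make sure $\|g\|_\infty$ in the inequality refers only to the sup over $F$ of $|g|$, which is what \eqref{eq:9} uses, so no global boundedness of the rational factor is required — the pointwise estimate inside the integral is all that matters. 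Everything else is a one-line substitution.
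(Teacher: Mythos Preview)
Your proposal is correct and follows essentially the same route as the paper: the paper simply applies \eqref{eq:9} to $\tilde f(z)=\dfrac{f(z)-f(\eta)}{z-\eta}$, which differs from your $g$ only by the harmless constant factor $\zeta-\eta$. Your observation that the hypothesis $\mu(\{\zeta\})=0$ plays no role in this particular estimate is also accurate---the paper's proof does not invoke it either.
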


\begin{proof}
Fix a function $f\in R(F)$ and a point $\eta\in F\setminus\{\zeta\}$. It suffices to note that $\tilde f(z)=\frac{f(z)-f(\eta)}{z-\eta}\in R(F)$
and use the inequality \eqref{eq:9} to $\tilde f$.
\end{proof}
As an immediade corollary we get.
\begin{corollary}\label{thm:23}
Let $\Omega\subset\CC$ be a domain and let $\zeta\in\partial\Omega$. Assume that $\mu$ is a Borel finite measure in $\CC$ such that
$$
|f(\zeta)|\le\int_{\Omega}|f|d\mu
$$
for any $f\in H^\infty(\Omega)$ which extends holomorphically to a neighborhood of $\zeta$. Then for any $\eta\in\Omega$ we have
$$
|f(\eta)-f(\zeta)|\le 2\|f\|_{\infty} M(\eta)|\eta-\zeta|.
$$
In particular, for any $\eta_1,\eta_2\in\Omega$ we have
\begin{equation}\label{eq:124}
c^\ast_{\Omega}(\eta_1,\eta_2)\le 34\Big(|\zeta-\eta_1| M(\eta_1)+|\zeta-\eta_2| M(\eta_2)\Big).
\end{equation}
Moreover, there exists a c-Cauchy sequence $\{\eta_n\}_{n\ge1}\subset\Omega$ such that $\eta_n\to\zeta$.
\end{corollary}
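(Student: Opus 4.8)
The plan is to deduce all three assertions from Proposition~\ref{thm:23a}, the approximation theorem quoted in Section~2, and Proposition~\ref{prop:11}, in that order. First, since only the integral over $\Omega$ enters the hypothesis and $\zeta\in\partial\Omega$ lies outside $\Omega$, I would replace $\mu$ by its restriction to $\Omega$: this changes neither the hypothesis nor the conclusions (it only decreases the Newton potential $M$, and proving the conclusions for the smaller potential gives them for the original one), so one may assume $\mu(\{\zeta\})=0$. The approximation theorem identifies $R(\Omega\cup\{\zeta\})$ with $\OO(\Omega)\cap C(\Omega\cup\{\zeta\})$ and shows that every such $f$ is a uniform limit on $\Omega$ of functions extending holomorphically past $\zeta$; since $\mu$ is finite, passing to the limit in the assumed inequality $|f_n(\zeta)|\le\int_\Omega|f_n|\,d\mu$ shows that inequality~\eqref{eq:9} of Proposition~\ref{thm:23a} holds for $F=\Omega\cup\{\zeta\}$. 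That proposition then gives the first assertion, $|f(\eta)-f(\zeta)|\le2\|f\|_\infty M(\eta)|\eta-\zeta|$ for all $\eta\in\Omega$; alternatively one applies the defining inequality directly to $\tilde f(z)=(f(z)-f(\eta))/(z-\eta)$, which again extends holomorphically past $\zeta$.

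Next I would prove \eqref{eq:124}. Given an arbitrary $f\in\OO(\Omega;\DD)$, the approximation theorem produces $f_n\in H^\infty(\Omega)$ with $\|f_n\|_\Omega\le17$, each extending holomorphically past $\zeta$, with $f_n\to f$ locally uniformly on $\Omega$. Applying the first assertion to each $f_n$ and combining via the triangle inequality gives $|f_n(\eta_1)-f_n(\eta_2)|\le34\bigl(|\zeta-\eta_1|M(\eta_1)+|\zeta-\eta_2|M(\eta_2)\bigr)$, and letting $n\to\infty$ yields the same bound for $|f(\eta_1)-f(\eta_2)|$. To turn the ordinary difference into $c^*_\Omega$ I would use that $m$ is $\aut(\DD)$-invariant: with $\psi\in\aut(\DD)$ carrying $f(\eta_1)$ to $0$ and $g=\psi\circ f\in\OO(\Omega;\DD)$ one has $m(f(\eta_1),f(\eta_2))=m(0,g(\eta_2))=|g(\eta_2)|=|g(\eta_2)-g(\eta_1)|$, which is bounded as above; taking the supremum over $f\in\OO(\Omega;\DD)$ gives~\eqref{eq:124}.

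Finally, for the Cauchy sequence I would first observe that $\zeta$ is not a peak point for $R(\Omega\cup\{\zeta\})$: a peak function $g$ would give $1=|g(\zeta)|^k\le\int_\Omega|g|^k\,d\mu\to0$ by dominated convergence, since $|g|<1$ on $\Omega$ and $\mu(\Omega)<\infty$. Then Corollary~\ref{cor:6a} forces $\LL(\DD(\zeta;r)\setminus\Omega)/r^2\to0$ as $r\to0$, while Proposition~\ref{prop:11} (here $\mu(\{\zeta\})=0$) gives that each set $\Pi(\epsilon)=\{w:|w-\zeta|M(w)>\epsilon\}$ has zero density at $\zeta$. Intersecting these two facts, for every $n$ there is $r_n<1/n$ with $\LL\bigl((\DD(\zeta;r_n)\setminus\Omega)\cup(\DD(\zeta;r_n)\cap\Pi(1/n))\bigr)<\pi r_n^2$, so one may choose $\eta_n\in\DD(\zeta;r_n)\cap\Omega$ with $|\eta_n-\zeta|M(\eta_n)\le1/n$. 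Then $\eta_n\to\zeta$, and \eqref{eq:124} gives $c^*_\Omega(\eta_n,\eta_m)\le34(1/n+1/m)\to0$; since $c_\Omega(\eta_n,\eta_m)=\frac12\log\frac{1+c^*_\Omega(\eta_n,\eta_m)}{1-c^*_\Omega(\eta_n,\eta_m)}$, the sequence $\{\eta_n\}$ is $c_\Omega$-Cauchy, which is the claim.

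I expect the main obstacle to be exactly this last construction: one needs $\Omega$ to fill a full-density neighbourhood of $\zeta$ — which is why the argument has to detour through ``$\zeta$ is not a peak point'' and Corollary~\ref{cor:6a} — and simultaneously one needs $|w-\zeta|M(w)$ to be small off a set of zero density at $\zeta$ (Proposition~\ref{prop:11}), and these two density statements have to be combined. The rest is bookkeeping: the constant $34=2\cdot17$ is produced by the approximation theorem, and the passage from the ordinary modulus of continuity to $c^*_\Omega$ is the standard Möbius trick.
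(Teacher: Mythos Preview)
Your proposal is correct and follows the same route as the paper: Proposition~\ref{thm:23a} for the first inequality, the approximation theorem (with the factor $17$) plus the triangle inequality for \eqref{eq:124}, and then Corollary~\ref{cor:6a} together with Proposition~\ref{prop:11} to produce the $c$-Cauchy sequence. The paper is terser --- it does not spell out the M\"obius trick for passing from $|f(\eta_1)-f(\eta_2)|$ to $c^\ast_\Omega$, nor the $|g|^k$ dominated-convergence argument for ruling out a peak function --- but your added details are exactly what is needed and match the intended argument.
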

\begin{proof} 
First part follows from Corollary~\ref{thm:23}. For the second part, if 
$$
\limsup_{r\to0}\frac{\LL(\DD(\zeta;r)\setminus\Omega)}{\pi r^2}>0
$$
then from Corollary~\ref{cor:6a} there exists a function $f\in R(\Omega\cup\{\zeta\})$ such that $|f|<1$ on $\Omega$ and $f(\zeta)=1$. In particular, the measure $\mu$ as in the assumptions does not exist. So, 
$$
\lim_{r\to0}\frac{\LL(\DD(\zeta;r)\cap\Omega)}{\pi r^2}=1.
$$
Then by Proposition~\ref{prop:11} there exists a sequence $\{\eta_n\}_{n\ge1}\subset \Omega$ with $\eta_n\to\zeta$ such that $|\zeta-\eta_n| M(\eta_n)\le\frac1{2^n}$. From Theorem~\ref{thm:23} we get the result.
\end{proof}

\begin{proof}[Proof of Theorem~\ref{thm:1}] Note that implications 
$(1)\implies(7)\implies(5)\implies(6)$ and $(2)\implies(3)$ are straightforward. The implication $(6)\implies(2)$ is proved above as a Corollary~\ref{thm:23}. 
And the implication $(4)\implies(1)$ follows from Theorem~\ref{thm:8i}.
\end{proof}

\section{A counterexample}
Let $V$ be a vector space. Following \cite{Pol-Gog} we call a mapping $Q:V\to[-\infty,+\infty)$ \emph{superlinear} if:
\begin{enumerate}
\item $Q(cv)=cQ(v)$ for any $c\ge0$ and any $v\in V$;
\item $Q(v_1+v_2)\ge Q(v_1)+Q(v_2)$.
\end{enumerate}
In \cite{Pol-Gog} the following version of a Hahn-Banach theorem is stated:

{\it Let $V$ be a vector space and let $M\subset V$ be a vector subspace. Assume that $Q:V\to[-\infty,+\infty)$ be a superlinear mapping. Then any linear operator $\ell:M\to\RR$ such that $\ell\ge Q$ on $M$ extends to a linear operator $L:V\to\RR$ such that $L\ge Q$ on $V$.}

The following elementary counterexample shows that it does not hold.
\begin{example}
Let $V=\RR^2$ and let $M=\RR\times\{0\}$. We put
$$
Q(x,y)=
\begin{cases}
-\infty\quad&\text{ if }y\le0\\
0&\text{ if }y>0
\end{cases}
$$
and $\ell(x,y)=x$. We have $\ell\ge Q$ on $M$. Moreover, $Q$ is superlinear. However, there is no linear $L:V\to\RR$ 
such that $L=\ell$ on $M$ and $L\ge Q$ on $V$.
\end{example}

%%%%%%%%%%%%%%%%%%%%%%%%%%%%%%%%%%%%%%%%%%%%%%%%%%%%%%%%%%%%%%%%%%%%%%
\bibliographystyle{amsplain}

%%%%%%%%%%%%%%%%%%%%%%%%%%%%%%%%%%%%%%%%%%%%%%%%%%%%%%%%%%%%%%%%%%%%%%

\end{document}